\newtheorem{theorem}{Theorem}
\newtheorem{lemma}[theorem]{Lemma}
\newtheorem{corollary}[theorem]{Corollary}
\newtheorem{proposition}[theorem]{Proposition}
\newtheorem{fact}[theorem]{Fact}
\theoremstyle{definition}
\newtheorem{definition}[theorem]{Definition}
\newtheorem{example}[theorem]{Example}
\newtheorem{remark}[theorem]{Remark}
\newtheorem{question}[theorem]{Question}
\numberwithin{equation}{section}
\newcommand{\cA}{\mathcal{A}}
\newcommand{\cB}{\mathcal{B}}
\newcommand{\cC}{\mathcal{C}}
\newcommand{\cE}{\mathcal{E}}
\newcommand{\cF}{\mathcal{F}}
\newcommand{\cG}{\mathcal{G}}
\newcommand{\cH}{\mathcal{H}}
\newcommand{\cK}{\mathcal{K}}
\newcommand{\cL}{\mathcal{L}}
\newcommand{\cM}{\mathcal{M}}
\newcommand{\cN}{\mathcal{N}}
\newcommand{\cR}{\mathcal{R}}
\newcommand{\cS}{\mathcal{S}}
\newcommand{\cU}{\mathcal{U}}
\def\N{\mathbb N}
\def\Q{\mathbb Q}
\def\R{\mathbb R}
\newcommand{\p}{\oplus}
\newcommand{\Fraisse}{Fra\"{i}ss\'{e}}
\newcommand{\seq}{\subseteq}
\newcommand{\vphi}{\varphi}
\newcommand{\xbar}{\bar{x}}
\def\Aut{\operatorname{Aut}}
\def\Isom{\operatorname{Isom}}
\def\dom{\operatorname{dom}}
\def\im{\operatorname{Im}}
\renewcommand{\Im}{\im}
   \def\MR#1{}
\begin{document}

\title[Extending partial isometries]{Extending partial isometries of generalized metric spaces}

\author{
Gabriel Conant \\
University of Notre Dame\\
gconant@nd.edu
}

\date{September 10,2018}

\begin{abstract}
We consider generalized metric spaces taking distances in an arbitrary ordered commutative monoid, and investigate when a class $\cK$ of finite generalized metric spaces satisfies the Hrushovski extension property: for any $A\in\cK$ there is some $B\in\cK$ such that $A$ is a subspace of $B$ and any partial isometry of $A$ extends to a total isometry of $B$. We prove the Hrushovski property for the class of finite generalized metric spaces over a semi-archimedean monoid $\cR$. When $\cR$ is also countable, we use this to show that the isometry group of the Urysohn space over $\cR$ has ample generics. Finally, we prove the Hrushovski property for classes of integer distance metric spaces omitting metric triangles of uniformly bounded odd perimeter. As a corollary, given odd $n\geq 3$, we obtain ample generics for the automorphism group of the universal, existentially closed graph omitting cycles of odd length bounded by $n$.
\end{abstract}

\subjclass[2010]{Primary 03C13; Secondary 05C12, 05C38}

\keywords{Hrushovski property, isometry groups, universal graphs}

\maketitle

\section{Introduction}\label{sec:intro}
\setcounter{theorem}{0}
\numberwithin{theorem}{section}

A well-known result of Hrushovski \cite{HrEPA} from 1992 says that, given a finite graph $\Gamma_1$, there is a finite graph $\Gamma_2$ such that $\Gamma_1$ is an induced subgraph of $\Gamma_2$ and any partial automorphism of $\Gamma_1$ extends to a total automorphism of $\Gamma_2$. Since then, several authors have investigated the occurrence of similar behavior in other classes of finite structures. For example, Herwig (\cite{HeEPA},\cite{HeEPA2}) proves analogous results for the class of finite $K_n$-free graphs; and Solecki \cite{Sol} proves analogous results for the class of finite metric spaces. 

In \cite{HHLS}, Hodges, Hodkinson, Lascar, and Shelah use Hrushovki's original result, along with an analysis of generic automorphisms, to prove that the automorphism group of the countable random graph has the small index property. In \cite{KeRo}, Kechris and Rosendal generalize this substantially by determining conditions on a \Fraisse\ class $\cK$, which characterize when the \Fraisse\ limit $\cM$ of $\cK$ has ``ample" generic automorphisms. They then show that this implies the small index property for the automorphism group of $\cM$. Translated to a general \Fraisse\ class $\cK$, Hrushovski's extension property for partial automorphisms can make considerable progress toward showing that $\cK$ satisfies the conditions in Kechris and Rosendal's characterization.

In this paper, we consider classes of generalized metric spaces, which take distances in arbitrary ordered commutative monoids, called \emph{distance monoids} (Definition \ref{def:DM}). This is a very robust setting, which naturally encompasses many of the previous examples, and also provides a convenient framework for obtaining new results. In particular, we investigate natural algebraic properties of a distance monoid $\cR$, which imply that the class of finite metric spaces over $\cR$ has the Hrushovski property. As a starting point, we observe in Theorem \ref{thm:arch} that Solecki's work in \cite{Sol} generalizes easily to show the Hrushovski property for $\cK_\cR$ in the case that $\cR$ is archimedean. This provides the base case for proving the same result when $\cR$ is \emph{semi-archimedean} (Definition \ref{def:SA}), which is much less restrictive and includes, for example, ultrametric spaces as a special case. Altogether we conclude that, when $\cR$ is countable and semi-archimedean, the isometry group of the Urysohn space over $\cR$ has ample generics. Finally, we extend our results to classes of generalized metric spaces omitting some fixed class of subspaces. This yields the Hrushovski property for certain classes of metric spaces omitting triangles of odd perimeter. Using this, we obtain ample generics for the automorphism group of the universal, existentially closed graph omitting cycles of uniformly bounded odd length.

We begin with a generalization of the notion of a metric space, in which distances between points come from arbitrary ordered additive structures. 

\begin{definition}\label{def:DM}$~$
\begin{enumerate}[\hspace{0pt}(1)]
\item A structure $\cR=(R,\p,\leq,0)$ is a \textbf{distance monoid} if
\begin{enumerate}[$(i)$]
\item $(R,\leq,0)$ is a linear order with least element $0$;
\item $(R,\p,0)$ is a commutative monoid with identity $0$;
\item for all $r,s,t,u\in R$, if $r\leq s$ and $t\leq u$ then $r\p t\leq s\p u$.
\end{enumerate}
\item Suppose $\cR=(R,\p,\leq,0)$ is a distance monoid. Given a set $A$ and a function $d\colon A\times A\to R$, we call $(A,d)$ an \textbf{$\cR$-metric space} if
\begin{enumerate}[$(i)$]
\item for all $a,b\in A$, $d(a,b)=0$ if and only if $a=b$;
\item for all $a,b\in A$, $d(a,b)=d(b,a)$;
\item for all $a,b,c\in A$, $d(a,c)\leq d(a,b)\p d(b,c)$.
\end{enumerate}
\item Given a distance monoid $\cR$, let $\cK_\cR$ be the class of finite $\cR$-metric spaces.
\end{enumerate}
\end{definition}

If $\cR$ is a \emph{countable} distance monoid, then $\cK_\cR$ is a countable class modulo isometry. In this case, it is straightforward to verify that $\cK_\cR$ is a \Fraisse\ class, using the natural notion of free amalgamation of $\cR$-metric spaces. In particular, given finite $\cR$-metric spaces $A=(A,d_A)$ and $B=(B,d_B)$, with $A\cap B\neq\emptyset$, define the \emph{free amalgamation of $A$ and $B$}, denoted $A\otimes B$, to be the $\cR$-metric space $C=(C,d_C)$, where $C=A\cup B$ and, for all $x,y\in C$,
$$
d_C(x,y)=\begin{cases}
d_A(x,y) & \text{if $x,y\in A$}\\
d_B(x,y) & \text{if $x,y\in B$}\\
\min_{z\in A\cap B}(d_A(x,z)\p d_B(z,y)) & \text{if $x\in A\backslash B$ and $y\in B\backslash A$.}
\end{cases}
$$ 
The operation $A\otimes B$ can be extended to the case where $A\cap B=\emptyset$ by setting, for $x\in A$ and $y\in B$, $d_C(x,y)$ to be the largest distance occurring in $A$ or $B$. Together, this gives the amalgamation property and the (dis)joint embedding property for $\cK_\cR$ (the hereditary property is obvious).

\begin{definition}
Given a countable distance monoid $\cR$, define the \textbf{$\cR$-Urysohn space}, denoted $\cU_\cR$, to be the countable \Fraisse\ limit of $\cK_\cR$.
\end{definition}

By standard results in classical \Fraisse\ theory, we have the following fact.

\begin{fact}
Suppose $\cR$ is a countable distance monoid. Then $\cU_\cR$ is the unique (up to isometry) countable, ultrahomogeneous $\cR$-metric space, which is universal for the class $\cK_\cR$ of finite $\cR$-metric spaces. 
\end{fact}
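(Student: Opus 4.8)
The plan is to recognize the statement as an instance of the fundamental theorem of \Fraisse\ theory and to reduce everything to checking that $\cK_\cR$ is a \Fraisse\ class. Recall that this amounts to four things: that $\cK_\cR$ contain only countably many isometry types, that it be hereditary (closed under isometric substructures), that it have the joint embedding property, and that it have the amalgamation property. Since $\cR$ is countable, a finite $\cR$-metric space is determined up to isometry by a finite symmetric $R$-valued distance matrix, so there are only countably many isometry types; this is exactly the observation recorded just before the definition of $\cU_\cR$. Heredity is immediate, since restricting $d$ to a subset $A_0\seq A$ preserves conditions $(i)$--$(iii)$ in the definition of an $\cR$-metric space. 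The joint embedding property follows from an easy construction: given two spaces, glue them and assign every cross pair a common distance $D$ exceeding all distances internal to either space, which satisfies the triangle inequality by monotonicity of $\p$ and minimality of $0$.

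The substantive point is the amalgamation property, and here I would use the free amalgam $A\otimes B$ introduced above. Given $A$ and $B$ with $A\cap B\neq\emptyset$, I would first check that $C=A\otimes B$ is a genuine $\cR$-metric space. Symmetry follows from symmetry of $d_A,d_B$ and commutativity of $\p$, and the condition $d_C(x,y)=0\iff x=y$ is easy: for a cross pair $x\in A\setminus B$, $y\in B\setminus A$ and any $z\in A\cap B$ we have $d_A(x,z)>0$, whence $d_A(x,z)\p d_B(z,y)\geq d_A(x,z)>0$ by monotonicity of $\p$ and minimality of $0$, so the defining minimum is positive. That the inclusions $A\hookrightarrow C$ and $B\hookrightarrow C$ are isometric is immediate from the first two cases of the definition of $d_C$. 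The one real calculation is the triangle inequality $d_C(x,z)\leq d_C(x,y)\p d_C(y,z)$, which I would verify by cases according to the location of $x$, $y$, $z$ among $A\setminus B$, $B\setminus A$, and $A\cap B$, repeatedly invoking the triangle inequalities in $A$ and $B$, commutativity and associativity of $\p$, and monotonicity $(iii)$; the cross cases, where the minimum in the definition is unpacked, are where the work lies, and I expect this to be the main (though routine) obstacle.

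Once $\cK_\cR$ is known to be a \Fraisse\ class, the conclusion is immediate from the fundamental theorem of \Fraisse\ theory. That theorem yields a countable structure, unique up to isomorphism, which is ultrahomogeneous and whose age is exactly $\cK_\cR$; by definition this structure is $\cU_\cR$. Countability and ultrahomogeneity are part of the theorem's output, and universality for $\cK_\cR$ is precisely the statement that the age equals $\cK_\cR$, i.e.\ every finite $\cR$-metric space embeds isometrically into $\cU_\cR$ (and hence every countable one does, by exhausting it with finite subspaces and extending the embeddings one point at a time via ultrahomogeneity). Finally, uniqueness up to isometry of a countable ultrahomogeneous $\cR$-metric space with age $\cK_\cR$ is the standard back-and-forth argument between two such spaces, using ultrahomogeneity on each side to extend a finite partial isometry by a single point at each stage.
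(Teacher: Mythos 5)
Your proposal is correct and follows essentially the same route as the paper: the paper also reduces the Fact to checking that $\cK_\cR$ is a \Fraisse\ class (noting countability and that amalgamation is the only nontrivial point, handled by the free amalgam $A\otimes B$) and then invokes the standard \Fraisse\ theorem for existence, ultrahomogeneity, universality, and uniqueness. Your write-up merely makes explicit the routine verifications (heredity, joint embedding, and the case analysis for the triangle inequality in $A\otimes B$) that the paper leaves to the reader.
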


\begin{example}\label{example}
$~$
\begin{enumerate}[\hspace{0pt}(1)]
\item Classical metric spaces are generalized metric spaces over the standard distance monoid $\R^{\geq0}=(\R^{\geq0},+,\leq,0)$. A countable example is  the submonoid $\Q^{\geq0}=(\Q^{\geq0},+,\leq,0)$ of rational numbers, which yields the \emph{rational Urysohn space} $\cU_{\Q^{\geq0}}$. The completion of $\cU_{\Q^{\geq0}}$ is known as the \emph{Urysohn space}, and is the unique separable ultrahomogeneous metric space, which is universal for the class of separable metric spaces. 
\item Ultrametric spaces are generalized metric spaces over the distance monoid $(\R^{\geq0},\max,\leq,0)$. In general, if $(R,\leq,0)$ is a linear order with least element $0$, then $\cR=(R,\max,\leq,0)$ is a distance monoid. When $R$ is countable, we have the associated \emph{ultrametric Urysohn space} $\cU_\cR$. Any distance monoid arising in this way is called \emph{ultrametric}. 

\item We will often focus on metric spaces taking only integer distances. Therefore, given an integer $n>0$, we define the distance monoid $\cS_n=(\{0,1,\ldots,n\},+_n,\leq,0)$, where $r+_n s:=\min\{r+s,n\}$. We also define $\cN=(\N,+,\leq,0)$.

\item The previous example is a special case of the following. Fix $S\seq\R^{\geq0}$ such that $0\in S$ and $S$ is closed under the binary operation $r+_S s=\sup\{x\in S:x\leq r+s\}$. When $+_S$ is associative, we have a distance monoid $\cS=(S,+_S,\leq,0)$. Associativity of $+_S$ is equivalent to what is known as the \emph{four-values condition} for $S$, which describes the ability to amalgamate metric spaces with distances in $S$. This condition was defined by Delhomm\'{e}, Laflamme, Pouzet, and Sauer in \cite{DLPS}, where these authors study metric spaces with restricted (real) distances. 
\end{enumerate}
\end{example}

Part (3) of the previous example provides a convenient setting in which to work with the natural metrics on graphs.

\begin{definition}
Suppose $(\Gamma,E)$ is a graph.
\begin{enumerate}[\hspace{0pt}(1)]
\item If $(\Gamma,E)$ is connected, then we define the \textbf{path-metric}, $d\colon\Gamma\times\Gamma\to \N$, by setting $d(x,y)$ to be the number of edges in the shortest path from $x$ to $y$. In this case, $(\Gamma,d)$ is an $\cN$-metric space. If $(\Gamma,d)$ has bounded diameter then it is an $\cS_n$-metric space for large enough $n$.
\item For $n>0$, we define the \textbf{path-metric truncated at $n$}, $d\colon \Gamma\times \Gamma\to \{0,1,\ldots,n\}$, by setting $d(x,y)$ to be the minimum of $n$ and the number of edges in the shortest path from $x$ to $y$ (if one exists). Then $(\Gamma,d)$ is an $\cS_n$-metric space. 
\end{enumerate}
\end{definition}

Our interest lies in questions around extending partial isometries of generalized metric spaces, and the resulting consequences for the isometry groups of universal generalized metric spaces (e.g. $\cU_\cR$ for a distance monoid $\cR$). We begin this investigation with the basic definitions. 

\begin{definition}
Fix a distance monoid $\cR$.
\begin{enumerate}[\hspace{0pt}(1)]
\item Given an $\cR$-metric space $A$, a \textbf{partial isometry of $A$} is an isometry $\vphi\colon A_1\to A_2$, where $A_1$ and $A_2$ are subspaces of $A$.
\item Suppose $\cK$ is a class of $\cR$-metric spaces and $A\in\cK$. Then $A$ has the \textbf{extension property in $\cK$} if there is $B\in\cK$ such that $A$ is a subspace of $B$ and any partial isometry of $A$ extends to a total isometry of $B$.
\item A class $\cK$ of finite $\cR$-metric spaces has the \textbf{Hrushovski property} if every element of $\cK$ has the extension property in $\cK$.
\end{enumerate}
\end{definition}

In the literature (e.g. \cite{HeLa}), the Hrushovski property is sometimes also referred to as EPPA: the \emph{extension property for partial automorphisms}.

For graphs, the path-metric truncated at $2$ associates $\cK_{\cS_2}$ with the class of finite graphs, and $\cU_{\cS_2}$ with the countable random graph. Viewed this way, Hrushovski's original result in \cite{HrEPA} is that $\cK_{\cS_2}$ has the Hrushovski property. In \cite{Sol}, Solecki shows that $\cK _{\R^{\geq 0}}$, the class of all finite metric spaces, also has the Hrushovski property. More precisely, it is shown that if $(G,+,0)$ is a subgroup of $(\R,+,0)$, then $\cK_\cG$ has the Hrushovski property, where $\cG$ is the distance monoid $(G^{\geq0},+,\leq,0)$. The only significant obstacle in extending Solecki's result to arbitrary distance monoids is that $\R^{\geq 0}$ is archimedean.

\begin{definition}
A distance monoid $\cR=(R,\p,\leq,0)$ is \textbf{archimedean} if, for all $r,s\in R^{>0}$, there exists some integer $n>0$ such that $s\leq nr$.
\end{definition}

Solecki's work in \cite{Sol} easily generalizes to show the Hrushovski property for $\cK_{\cR}$  for any archimedean distance monoid $\cR$ (see Theorem \ref{thm:arch}). 
However, many interesting examples of generalized metric spaces are obtained using non-archimedean monoids, which leads to the notion of a \emph{semi-archimedean} distance monoid, defined below. To motivate the definition, note that on any distance monoid $\cR$, we have the relation of \emph{archimedean equivalence}: $r\sim s$ if there is some $n>0$ such that $s\leq nr$ and $r\leq ns$). The $\sim$-classes of $\cR$ are called \emph{archimedean classes}, and the \emph{trivial} class is $\{0\}$.  In this terminology, a nontrivial distance monoid $\cR$ is archimedean if and only if it has exactly one nontrivial archimedean class. On the other hand, in an ultrametric monoid $\cR=(R,\max,\leq,0)$, archimedean equivalence coincides with equality, and so $\cR$ is severely non-archimedean. Archimedean and ultrametric distance monoids are extreme cases of the following more general notion.

\begin{definition}\label{def:SA}
A distance monoid $\cR=(R,\p,\leq,0)$ is \textbf{semi-archimedean} if, for all $r,s\in R^{>0}$, if $nr<s$ for all $n>0$ then $r\p s=s$.
\end{definition}

In other words, a semi-archimedean distance monoid can have multiple nontrivial archimedean classes, but addition between elements of distinct archimedean classes is as trivial as possible. Our first main result is the following theorem, which is proved in Section \ref{sec:SA}.

\begin{theorem}\label{thm:SA}
If $\cR$ is a semi-archimedean distance monoid, then $\cK_\cR$ has the Hrushovski property.
\end{theorem}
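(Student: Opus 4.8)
The plan is to induct on the number of distinct nonzero archimedean classes of distances occurring in a given finite $\cR$-metric space $A$, peeling off the largest such class at each stage and reducing the top level to the archimedean case already settled by Theorem \ref{EPPI}. Write $C$ for the largest archimedean class among the nonzero distances of $A$, and call $r$ \emph{small} if $nr<s$ for every $n>0$ and every $s\in C$. The semi-archimedean hypothesis (Definition \ref{def:SA}) forces $r\p s=s$ whenever $r$ is small and $s\in C$, and a short computation using convexity of archimedean classes shows that $C\cup\{0\}$ is closed under $\p$, so that $\cR_C:=(C\cup\{0\},\p,\leq,0)$ is an \emph{archimedean} distance monoid. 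I would first record the structural consequence that the relation $x\sim y$, declared to hold when $d(x,y)$ is small (or $x=y$), is an equivalence relation on $A$ whose classes I call \emph{fibers}; moreover the distance between points lying in distinct fibers depends only on the fibers, so $A$ descends to a well-defined quotient $\bar A$ that is an $\cR_C$-metric space, while each fiber is a metric space over the convex submonoid $\cR'$ of small elements, which is again semi-archimedean but omits the class $C$ and hence is covered by the inductive hypothesis.

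Next I would set up the reassembly. Since $\cK_{\cR'}$ is a \Fraisse\ class, I can jointly embed all fibers of $A$ into a single $F_0\in\cK_{\cR'}$ and then apply the inductive hypothesis to $F_0$ to obtain $F\supseteq F_0$ in which every partial isometry extends to a total isometry; similarly Theorem \ref{EPPI} applied to $\bar A\in\cK_{\cR_C}$ yields $\bar B\supseteq\bar A$ absorbing every partial isometry of $\bar A$. I then define $B$ to be the \emph{lexicographic blow-up} of $\bar B$ by $F$: the underlying set is a disjoint copy $F_{\bar b}$ of $F$ over each $\bar b\in\bar B$, distances inside a single copy are the $F$-distances, and the distance between a point of $F_{\bar b}$ and a point of $F_{\bar b'}$ with $\bar b\neq\bar b'$ is set equal to $d_{\bar B}(\bar b,\bar b')$. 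The verification that $B$ is a genuine $\cR$-metric space is where the semi-archimedean property does its work: any triangle with two vertices in one copy and one in another collapses under the identity $r\p s=s$ for small $r$ and $s\in C$, so the triangle inequality reduces to that of $\bar B$ and of $F$. Using the fixed embeddings $A_{\bar a}\hookrightarrow F$ of the fibers, one checks that $A$ embeds isometrically into $B$.

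Finally I would extend an arbitrary partial isometry $\varphi$ of $A$. Because $\varphi$ preserves distances it preserves smallness, hence respects $\sim$ and descends to a partial isometry $\bar\varphi$ of $\bar A$; extend this to a total isometry $\bar\psi$ of $\bar B$. The key point is that $\bar\psi$ is a \emph{bijection}, so distinct source fibers are sent to distinct target fibers, and I may therefore choose the lift on each copy independently: over a point $\bar a$ in the domain of $\bar\varphi$, the restriction of $\varphi$ is a partial isometry between the subspaces $A_{\bar a}$ and $A_{\bar\varphi(\bar a)}$ of $F$, which the extension property of $F$ promotes to a total isometry $\rho_{\bar a}$ of $F$; over every other point of $\bar B$ I take the canonical identification of the two copies. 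Assembling these fiber maps over the base map $\bar\psi$ produces a total isometry $\psi$ of $B$ extending $\varphi$, completing the induction. I expect the main obstacle to be exactly this coordination step: one must guarantee that a single $B$ works \emph{simultaneously} for all partial isometries of $A$, and that the fiberwise isometries of $F$ and the base isometry of $\bar B$ mesh into an honest automorphism of $B$. The independence afforded by $\bar\psi$ being a bijection, together with the choice of $\bar B$ and $F$ to absorb all partial isometries at once, is what makes this possible; the delicate bookkeeping lies in working with one common copy of $F$ under fixed fiber embeddings, so that the cross-fiber maps induced by $\varphi$ are literally partial isometries of $F$.
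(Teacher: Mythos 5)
Your proposal is correct and follows essentially the same route as the paper: quotient by the equivalence relation of ``small'' distance (using the semi-archimedean law to get constant cross-fiber distances), handle the quotient by the archimedean case (Theorem \ref{EPPI}), handle the fibers by induction, and reassemble via a blow-up whose partial isometries split into a base isometry and coordinated fiberwise isometries. The only difference is bookkeeping: where you jointly embed all fibers into one space $F_0$ and extend that, the paper instead enlarges $A$ so that all $\sim$-classes become isometric and then uses copies of a single extension $B_1$ --- two equivalent ways of producing one common fiber prototype.
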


The proof of this theorem is by induction on the number of nontrivial archimedean classes of $\cR$. The base case ($\cR$ archimedean) is proved by generalizing \cite{Sol}, as discussed above. The inductive step uses a ``composition operation" from \cite{SokRQ} on classes of relational structures, as well as the fact that, for classes with joint embedding, this operation preserves the Hrushovski property (shown by Pawliuk and Soki\'{c} \cite{PaSo}).  The class of semi-archimedean distance monoids is essentially the largest class for which this proof strategy works (see Propositions \ref{prop:glue} and \ref{prop:SAstructure}). Therefore, the question of extending partial isometries of $\cR$-metric spaces, where $\cR$ is a general distance monoid, remains open and seems to demand a new method of proof.

\begin{question}
Let $\cR$ be an arbitrary distance monoid. Does $\cK_\cR$ have the Hrushovski property?
\end{question}

Up to isomorphism, the smallest non-semi-archimedean distance monoid is $\cS=(S,+_S,\leq,0)$, where $S=\{0,1,3,4\}$ (as defined in Example \ref{example}$(4)$). Thus this example gives a good test case for proving the Hrushovski property for larger classes of distance monoids. 

In addition to being a combinatorially interesting property of finite structures, the Hrushovski property can also have significant consequences for automorphism groups of countable structures. Recall that if $\cM$ is a countable structure, then $G=\Aut(\cM)$ is a separable topological group under the pointwise convergence topology. Quoting \cite{KeRo}, we say $G$ has \emph{ample generics} if, for all $n>0$, the conjugation action of $G$ on $G^n$ has a comeager orbit. If $G$ has ample generics then any subgroup of index less than $2^{\aleph_0}$ is open (and therefore has countable index). It then follows that any homomorphism from $G$ to a separable topological group is continuous. See \cite{KeRo} for details.

 In \cite{KeRo}, Kechris and Rosendal characterize the existence of ample generics for $\Aut(\cM)$, in the case that $\cM$ is the \Fraisse\ limit of a \Fraisse\ class $\cK$. This characterization is given in terms of the \emph{joint embedding property} and \emph{weak amalgamation property}, which are defined for the class $\cK^{p,n}$ consisting of tuples $(A,\vphi_1,\ldots,\vphi_n)$ where $A\in\cK$ and each $\vphi_i$ is a partial automorphism of $A$. In the setting of metric spaces, it is shown in \cite[Section 6.2]{KeRo} that the Hrushovski property for $\cK_{\Q^{\geq0}}$ implies the weak amalgamation property for $\cK_{\Q^{\geq0}}^{p,n}$. Together with joint embedding for $\cK_{\Q^{\geq0}}^{p,n}$ (also shown in \cite{KeRo}), one obtains Solecki's result that $\Isom(\cU_{\Q^{\geq0}})$ has ample generics. For any distance monoid $\cR$, the proofs of joint embedding for $\cK_{\cR}^{p,n}$, and weak amalgamation for $\cK_{\cR}^{p,n}$ as a consequence of the Hrushovski property for $\cK_{\cR}$, go through exactly as in \cite{KeRo} for $\cK_{\Q^{\geq 0}}$. Altogether, we have the following conclusion.

\begin{fact}\label{fact:ample}
Suppose $\cR$ is a countable distance monoid and $\cK_\cR$ has the Hrushovski property. Then $\Isom(\cU_\cR)$ has ample generics. 
\end{fact}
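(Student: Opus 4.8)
The plan is to reduce the statement to the abstract characterization of ample generics for automorphism groups of \Fraisse\ limits, due to Kechris and Rosendal \cite{KeRo}, and then to verify its two hypotheses for the class of systems built over $\cK_\cR$. Since $\cR$ is countable, $\cK_\cR$ is a \Fraisse\ class with limit $\cU_\cR$, and when $\cR$-metric spaces are regarded as relational structures with one binary relation for each nonzero distance, a bijection is an isometry precisely when it preserves all of these relations; hence $\Isom(\cU_\cR)=\Aut(\cU_\cR)$ as topological groups. Kechris and Rosendal's theorem then reduces the problem to showing that, for every $n\geq1$, the class $\cK_\cR^{p,n}$ of systems $(A,\vphi_1,\dots,\vphi_n)$---with $A\in\cK_\cR$ and each $\vphi_i$ a partial isometry of $A$, and with the natural notion of embedding of systems---has both the joint embedding property and the weak amalgamation property. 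So it suffices to establish these two properties.

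For joint embedding, given systems $(A,\bar\vphi)$ and $(B,\bar\psi)$, I would realize both inside a single member of $\cK_\cR^{p,n}$ by freely amalgamating the underlying $\cR$-metric spaces and combining the two families of partial isometries into a single family of partial isometries of the amalgam. This construction uses only the free-amalgamation operation $\otimes$ from Definition \ref{def:DM} and its basic properties, which are available verbatim over any distance monoid $\cR$; consequently the verification is identical to the one carried out for $\cR=\Q^{\geq0}$ in \cite{KeRo}.

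The substantive step is weak amalgamation, and this is exactly where the Hrushovski property is used. Given a system $(A,\vphi_1,\dots,\vphi_n)$, I would apply the Hrushovski property for $\cK_\cR$ to $A$ to obtain some $B\in\cK_\cR$ with $A\seq B$ such that every partial isometry of $A$---in particular each $\vphi_i$---extends to a total isometry $\psi_i$ of $B$. I claim that $(B,\psi_1,\dots,\psi_n)$, together with the inclusion $A\hookrightarrow B$, is the required weak-amalgamation witness for $(A,\bar\vphi)$: because each $\psi_i$ is now a total automorphism of $B$, any two system-embeddings of $(B,\bar\psi)$ into members of $\cK_\cR^{p,n}$ can be combined---via free amalgamation of the underlying spaces over the image of $A$---into a single system in which the two images agree on $A$, the freeness of the amalgamation guaranteeing that the resulting partial isometries are well defined. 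This is precisely the argument of \cite[Section 6.2]{KeRo}, whose only metric-theoretic input is, again, free amalgamation.

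I do not expect a genuinely new obstacle at this point, since the difficulty has all been absorbed into the Hrushovski property itself (Theorems \ref{EPPI} and \ref{thm:SA}). The one thing that must be confirmed is that Kechris and Rosendal's metric-space arguments for $\Q^{\geq0}$ appeal only to free amalgamation and to no archimedean or real-valued feature of the distances---equivalently, that every step transfers to an arbitrary distance monoid. Granting this, their characterization yields at once that $\Isom(\cU_\cR)$ has ample generics.
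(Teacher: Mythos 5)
Your overall route is exactly the paper's: the paper proves this fact by citing Kechris and Rosendal's characterization of ample generics in terms of the joint embedding and weak amalgamation properties for $\cK_\cR^{p,n}$, and by observing that the arguments of \cite[Section 6.2]{KeRo} for $\Q^{\geq0}$ use only free amalgamation of metric spaces, hence transfer verbatim to an arbitrary distance monoid, with the Hrushovski property supplying the weak amalgamation witnesses. Your identification of $\Isom(\cU_\cR)$ with $\Aut(\cU_\cR)$, your treatment of joint embedding, and your choice of $(B,\psi_1,\ldots,\psi_n)$ (with each $\psi_i$ total) as the witness for $(A,\vphi_1,\ldots,\vphi_n)$ all match that outline.

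However, one step of your weak amalgamation argument fails as stated: you combine two extensions of $(B,\bar\psi)$ ``via free amalgamation of the underlying spaces over the image of $A$.'' The amalgamation must be taken over the image of $B$, not of $A$. Indeed, suppose $f_j:(B,\bar\psi)\to(C_j,\bar\chi^j)$ are the two embeddings, and suppose $a\in A$ satisfies $\psi_i(a)\in B\setminus A$ (which can certainly happen, since $\vphi_i$ is only a partial isometry of $A$ and its total extension $\psi_i$ may move points of $A$ off of $A$). In the free amalgam of $C_1$ and $C_2$ over the image of $A$, the points $f_1(\psi_i(a))$ and $f_2(\psi_i(a))$ are distinct, yet $f_1(a)=f_2(a)$, $\chi^1_i(f_1(a))=f_1(\psi_i(a))$, and $\chi^2_i(f_2(a))=f_2(\psi_i(a))$; so $\chi^1_i\cup\chi^2_i$ is not even a function, and the construction breaks. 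Amalgamating instead over the image of $B$ (identifying $f_1(B)$ with $f_2(B)$) repairs this: the unions $\chi^1_i\cup\chi^2_i$ are then well defined because both maps restrict to $\psi_i$ on $B$, and they are partial isometries of the free amalgam precisely because each $\psi_i$ is a total, surjective isometry of $B$: for $x\in\dom(\chi^1_i)\setminus B$ and $y\in\dom(\chi^2_i)\setminus B$, surjectivity of $\psi_i$ on $B$ lets one rewrite $\min_{z\in B}\bigl(d_{C_1}(\chi^1_i(x),z)\p d_{C_2}(z,\chi^2_i(y))\bigr)$ as $\min_{z\in B}\bigl(d_{C_1}(x,z)\p d_{C_2}(z,y)\bigr)$. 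This yields agreement of the two embeddings on all of $B$, hence in particular on $A$, which is more than weak amalgamation requires. With this correction, your proof is the one the paper intends.
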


Applying Theorem \ref{thm:SA}, we then have:

\begin{corollary}
If $\cR$ is a countable semi-archimedean distance monoid, then $\Isom(\cU_\cR)$ has ample generics.
\end{corollary}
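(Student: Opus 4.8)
The plan is to deduce this directly by combining Theorem \ref{thm:SA} with the Fact immediately preceding the statement, since both of the required inputs are already in hand. First I would observe that, because $\cR$ is semi-archimedean, Theorem \ref{thm:SA} applies to give that $\cK_\cR$ has the Hrushovski property. Then, since $\cR$ is moreover assumed to be countable, the hypotheses of the preceding Fact are met, and I would invoke it to conclude that $\Isom(\cU_\cR)$ has ample generics.

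To confirm that nothing substantive is hidden in this two-line argument, I would unpack what the Fact encapsulates. The countability of $\cR$ guarantees that $\cK_\cR$ is a countable \Fraisse\ class (modulo isometry), so that $\cU_\cR$ is well-defined as its \Fraisse\ limit and $\Isom(\cU_\cR)=\Aut(\cU_\cR)$ is a separable topological group under the pointwise convergence topology. Ample generics is then obtained through the Kechris--Rosendal criterion: for each $n>0$, one shows that the class $\cK_\cR^{p,n}$ of tuples $(A,\vphi_1,\ldots,\vphi_n)$, with $A\in\cK_\cR$ and each $\vphi_i$ a partial automorphism of $A$, enjoys the joint embedding property and the weak amalgamation property. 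As recorded in the discussion preceding the Fact, joint embedding for $\cK_\cR^{p,n}$ goes through exactly as in \cite{KeRo}, while the Hrushovski property for $\cK_\cR$ --- which is precisely the output of Theorem \ref{thm:SA} --- yields the weak amalgamation property for $\cK_\cR^{p,n}$ by the same argument given there in the case $\cR=\Q^{\geq0}$.

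Consequently, I expect there to be no genuine obstacle remaining at this stage: the entire combinatorial content resides in Theorem \ref{thm:SA}, and the passage from the Hrushovski property to ample generics is a formal consequence of the Kechris--Rosendal machinery packaged into the Fact. The only hypothesis one must verify is present is the countability of $\cR$, which is needed both for $\cU_\cR$ to be defined and for $\Isom(\cU_\cR)$ to be separable, and this is assumed outright in the statement. Thus the proof is a direct specialization rather than a new argument.
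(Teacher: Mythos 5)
Your proof is correct and matches the paper's argument exactly: the corollary is an immediate consequence of Theorem \ref{thm:SA} combined with the preceding Fact, whose content (joint embedding and weak amalgamation for $\cK_\cR^{p,n}$ via the Kechris--Rosendal criterion) you have unpacked accurately. Nothing further is needed.
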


We should mention that recent work of Malicki \cite{MalUlt} shows ample generics for the isometry groups of Polish \emph{ultrametric} Urysohn spaces. 

Finally, in Section \ref{sec:O}, we consider the Hrushovski property for classes of generalized metric spaces omitting some fixed class of subspaces. In particular, given a distance monoid $\cR$ and a class $\cF$ of $\cR$-metric spaces, we say an $\cR$-metric space $A$ is \emph{$\cF$-free} if  no subspace of $A$ is isometric to an element of $\cF$. A canonical example is the class of finite $K_n$-free graphs for a fixed $n\geq 3$ (considered as $\cS_2$-metric spaces), and its \Fraisse\ limit which we denote $\cH_n$ (often called a \emph{Henson graph}). Our focus will be the next example.

\begin{example}\label{ex:odd}
Fix an odd integer $n\geq 3$. Let $\cK^c_n$ denote the class of finite graphs which omit all cycles of odd length bounded by $n$ (i.e. contain no such cycle as a subgraph). By work of Komj\'{a}th, Mekler, and Pach (\cite{Kom}, \cite{KMP}), there is a countable, existentially closed graph $\Gamma^c_n$, which omits cycles of odd length bounded by $n$ and contains every graph in $\cK^c_n$ as an induced subgraph. We now translate this situation to the setting of generalized metric spaces. Let $\cF^c_n$ denote the class of \emph{metric triangles of odd perimeter bounded by $n$} (i.e. $3$-point metric spaces with integer distances, in which the sum of the three nonzero distances is odd and bounded by $n$). Define $n_*=\frac{n+1}{2}$ and, for any graph $\Gamma\in\cK^c_n$, let $d_\Gamma$ be the path-metric on $\Gamma$ truncated at $n_*$. Under this translation, $\cK^c_n$ is precisely the class of finite $\cF^c_n$-free $\cS_{n_*}$-metric spaces. Since $\Gamma^c_n$ is existentially closed, it has diameter $n_*$ with respect to the path-metric, and so the path-metric agrees with its truncation at $n_*$. Thus, as an $\cS_{n_*}$-metric space, $\Gamma^c_n$ is $\cF^c_n$-free and isometrically contains  every finite $\cF^c_n$-free $\cS_{n_*}$-metric space. 
\end{example}

The main theorem of Section \ref{sec:O} will imply the Hrushovski property for the classes of generalized metric spaces discussed above. For $K_n$-free graphs, this result is due to Herwig (\cite{HeEPA}, \cite{HeEPA2}), and there is no substantive difference between viewing these structures as graphs versus $\cS_2$-metric spaces. On the other hand, the Hrushovski property for metric spaces omitting metric triangles of odd perimeter has not been previously shown. Moreover, we will use this result to obtain ample generics for the automorphism group of the graph $\Gamma_n^c$. This is interesting because $\Gamma^c_n$ is not homogeneous as a graph and, as a class of graphs, $\cK^c_n$ is not a \Fraisse\ class and does not have the Hrushovski property. However, as a class of $\cS_{n_*}$-metric spaces, $\cK^c_n$ is a \Fraisse\ class. In particular, if $A$ and $B$ are $\cF^c_n$-free $\cS_{n_*}$-metric spaces, then the free amalgamation $A\otimes B$ is still $\cF^c_n$-free. It is also a somewhat folkloric exercise to show that the \Fraisse\ limit of $\cK^c_n$ (as a class of $\cS_{n_*}$-metric spaces) is isometric to $\Gamma^c_n$ (under the path-metric), and so $\Gamma^c_n$ is a \emph{metrically homogeneous graph}.  In fact, along with the Henson graphs, $\Gamma^c_n$ is one of the primary ingredients in an ongoing project, initiated by Cherlin, which aims to classify all metrically homogeneous graphs (see \cite{Chcat},\cite{Chercat}).

To obtain the results described above, in Section \ref{sec:O} we will prove a metric space analog of the Herwig-Lascar Theorem (see Fact \ref{fact:HL} and Theorem \ref{thm:O}), which is the main tool in Solecki's work. Using this, we prove:  

\begin{theorem}\label{thm:oddT}
Fix an odd integer $n\geq 3$.
\begin{enumerate}[$(a)$]
\item Let $n_*=\frac{n+1}{2}$. Then the class $\cK^c_n$ of $\cS_{n_*}$-metric spaces, which omit metric triangles of odd perimeter bounded by $n$, has the Hrushovski property.
\item Let $\Gamma^c_n$ be the countable, universal, existentially closed graph omitting cycles of odd length bounded by $n$. Then $\Aut(\Gamma_n)$ has ample generics.
\end{enumerate}
\end{theorem}

Returning to the general picture, this is only partial progress. In particular, the restrictions in Section \ref{sec:O} placed on the omitted family $\cF$ are quite strong. Moreover, we work only with archimedean distance monoids.

\begin{question}
Let $\cR$ be an arbitrary distance monoid and let $\cF$ be a finite class of finite $\cR$-metric spaces. Suppose the class $\cK$ of $\cF$-free $\cR$-metric spaces satisfies the amalgamation properties of a \Fraisse\ class (but perhaps is uncountable). Under what conditions can we conclude that $\cK$ has the Hrushovski property?
\end{question}

\section{Proof of Theorem \ref{thm:SA}}\label{sec:SA}

In this section, we prove that $\cK_{\cR}$ has the Hrushovski property for any semi-archimedean distance monoid $\cR$. The proof will be by induction on the number of nontrivial archimedean classes in a semi-archimedean distance monoid with finitely many classes. The base case  is precisely when $\cR$ is archimedean, and so we deal with this case right away.

\begin{theorem}\label{thm:arch}
If $\cR$ is an archimedean distance monoid then $\cK_{\cR}$ has the Hrushovski property.
\end{theorem}

This result is a mild generalization of the case $\cR=\R^{\geq0}$, due to Solecki \cite{Sol}, and is also a special case of Theorem \ref{thm:O} below (see Remark \ref{rem:arch}). Toward the induction step of Theorem \ref{thm:SA}, we make the following definition. 

\begin{definition}\label{def:bracket}
Let $\cR_1=(R_1,\p_1,\leq_1,0)$ and $\cR_2=(R_2,\p_2,\leq_2,0)$ be distance monoids. Define a distance monoid $\llbracket \cR_1,\cR_2\rrbracket=(R,\p,\leq,0)$ as follows:
\begin{enumerate}[$(i)$]
\item $R=\{0\}\cup \{(r,1):r\in R_1^{>0}\}\cup\{(r,2):r\in R_2^{>0}\}$;
\item given $i\in\{1,2\}$ and $r,s\in R_i^{>0}$, $(r,i)\p (s,i)=r\p_i s$ and $(r,i)\leq (s,i)$ if and only if $r\leq _i s$.
\item given $r\in R_1^{>0}$ and $s\in R_2^{>0}$, $(r,1)\leq (s,2)$ and $(r,1)\p (s,2)=(s,2)$.
\end{enumerate}
\end{definition}

Informally, $\llbracket \cR_1,\cR_2\rrbracket$ is obtained by taking the disjoint union of $R_1$ and $R_2$ over $\{0\}$, extending the linear order so that $R_1<R_2\backslash\{0\}$, and extending the monoid operation so that $r\p s=s$ for $r\in R_1$ and $s\in R_2$. The next result connects this notion  to work of Soki\'{c} \cite{SokRQ} and Pawliuk-Soki\'{c} \cite{PaSo} on ``composition classes" of relational structures.

\begin{proposition}\label{prop:glue}
Fix distance monoids $\cR_1$ and $\cR_2$.
\begin{enumerate}[$(a)$]
\item $\llbracket \cR_1,\cR_2\rrbracket$ is a distance monoid.
\item Suppose $A=(A,d)$ is an $\llbracket \cR_1,\cR_2\rrbracket$-metric space. Define $\sim$ on $A$ such that, given $x,y\in A$, $x\sim y$ if and only if $d(x,y)\in \cR_1$. Then $\sim$ is an equivalence relation on $A$. If $X\seq A$ is a single $\sim$-class, then $(X,d)$ is an $\cR_1$-metric space. Moreover, there is a well-defined $\cR_2$-metric $d_*$ on $A_*=A/\!\!\sim$  such that, for $\sim$-inequivalent $x,y\in A$, $d_*([x],[y])=d(x,y)$.
\item Let $A=(A,d_A)$ be an $\cR_2$-metric space and, for $x\in A$, let $B_x$ be an $\cR_1$-metric space. Define $C=(C,d_C)$ to be the disjoint amalgamation of $\{B_x:x\in A\}$ where, for distinct $x,y\in A$ and $u\in B_x$, $v\in B_y$, $d_C(u,v)=d_A(x,y)$. Then $C$ is an $\llbracket \cR_1,\cR_2\rrbracket$-metric space.
\item $\cK_{\llbracket \cR_1,\cR_2\rrbracket}$ has the Hrushovksi property if and only if $\cK_{\cR_1}$ and $\cK_{\cR_2}$ have the Hrushovski property. 
\end{enumerate}
\end{proposition}
\begin{proof}
Parts $(a)$, $(b)$, and $(c)$ are straightforward and left to the reader. Parts $(b)$ and $(c)$ are precisely what is needed to verify that $\cK_{\llbracket \cR_1,\cR_2\rrbracket}=\cK_{\cR_2}[\cK_{\cR_1}]$ where, given classes $\cK$ and $\cL$ of relational structures, the composition class $\cK[\cL]$ is defined in \cite{PaSo} (and also in \cite{SokRQ} using the notation $\cL\mathcal{E}\cK$). Since $\cK_\cR$ satisfies joint embedding for any distance monoid $\cR$, part $(d)$ follows as a special case of \cite[Proposition 6.3]{PaSo}.
\end{proof}

\begin{remark}
The right-to-left direction of part $(d)$ (which is the only direction needed for the proof of Theorem \ref{thm:SA}) was shown in the author's thesis \cite[Corollary 4.4.7]{Cothesis} (independently of \cite{PaSo}), and a full proof was given in an earlier draft of this article. We thank the referee for bringing our attention to the recent work of Pawliuk and Soki\'{c} \cite{PaSo}, which provides a general framework for the underlying combinatorics of this result.
\end{remark}

Note that the bracketing operation in Definition \ref{def:bracket} is associative, i.e., $\llbracket \llbracket \cR_1,\cR_2\rrbracket,\cR_3\rrbracket$ is isomorphic to $\llbracket \cR_1,\llbracket \cR_2,\cR_3\rrbracket\rrbracket$. So we have a well-defined notation $\llbracket \cR_1,\cR_2,\cR_3\rrbracket$. This yields the following structural description of semi-archimedean distance monoids.

\begin{proposition}\label{prop:SAstructure}
Fix $n>0$. A distance monoid $\cR$ is semi-archimedean with $n$ nontrivial archimedean classes if and only if there are nontrivial archimedean distance monoids $\cR_1,\ldots,\cR_n$ such that $\cR\cong\llbracket \cR_1,\ldots,\cR_n\rrbracket$.
\end{proposition}
\begin{proof}
It is straightforward to check that if $\cR_1,\ldots,\cR_n$ are nontrivial and archimedean, then $\llbracket \cR_1,\ldots,\cR_n\rrbracket$ is semi-archimedean and has $n$ nontrivial archimedean classes. These properties are invariant under isomorphism.

For the other direction, suppose $\cR$ is semi-archimedean with $n$ nontrivial archimedean classes. Let $\sim$ denote archimedean equivalence in $\cR$ (i.e. $r\sim s$ if and only if there is $n>0$ such that $r\leq ns$ and $s\leq nr$). Given $r,s,t\in R$, if $r\leq s\leq t$ and $r\sim t$, then $r\sim s$. So we have $R=R_0\cup\ldots\cup R_n$, where each $R_i$ is an archimedean class and $R_i<R_j$ for all $i<j$. Note $R_0=\{0\}$ is the trivial class. Next, given $r,s\in R$, if $r\sim s$ then $r\sim r\p s$, and so each $R_i$ is closed under $\p$. So, for each $1\leq i\leq n$, we have an archimedean distance monoid $\cR_i=(R_i\cup\{0\},\p,\leq,0)$. Finally, if $i<j$, $r\in R_i$, and $s\in R_j$, then $r\p s=s$ since $\cR$ is semi-archimedean. Altogether, $\cR\cong\llbracket\cR_1,\ldots,\cR_n\rrbracket$. 
\end{proof}

The reader may note that Definition \ref{def:bracket} can be altered to yield a similar structural result for semi-archimedean distance monoids with infinitely  many archimedean classes (but this will not be needed for our results).  For example, any ultrametric distance monoid $\cR$ can be obtained in this fashion, using (possibly infinitely many) copies of $\cS_1=(\{0,1\},+_1,\leq,0)$. 

The  following is the final observation needed for Theorem \ref{thm:SA}.

\begin{proposition}\label{prop:ctble}
Let $\cR$ be a distance monoid and suppose $\cK_{\cR'}$ has the Hrushovski property for any finitely generated submonoid $\cR'$ of $\cR$. Then $\cK_{\cR}$ has the Hrushovski property.
\end{proposition}
\begin{proof}
For any finite $\cR$-metric space $A$, if $\cR'$ is the submonoid of $\cR$ generated by the distances in $A$, then $A$ is an $\cR'$-metric space and $\cK_{\cR'}\seq\cK_\cR$.
\end{proof}

We now prove the Hrushovski property for $\cK_\cR$ when $\cR$ is semi-archimedean.

\begin{proof}[Proof of Theorem \ref{thm:SA}]
Fix a semi-archimedean distance monoid $\cR$. Since archimedean equivalence is preserved in submonoids, it follows that any submonoid of $\cR$ is again semi-archimedean. By Proposition \ref{prop:ctble}, we may assume $\cR$ is finitely generated, and thus has only finitely many archimedean classes. By Proposition \ref{prop:SAstructure}, we may assume $\cR=\llbracket\cR_1,\ldots,\cR_n\rrbracket$ for some nontrivial archimedean distance monoids $\cR_1,\ldots,\cR_n$. Now the Hrushovski property for $\cK_\cR$ follows by induction, Theorem \ref{thm:SA}, and Proposition \ref{prop:glue}$(d)$ (in the notation of \cite{PaSo}, $\cK_\cR=\cK_{\cR_n}[\ldots[\cK_{\cR_2}[\cK_{\cR_1}]]\ldots]$).
\end{proof}

\section{Metric Spaces with Omitted Subspaces}\label{sec:O}

In this section, we obtain the Hrushovski property for certain classes of generalized metric spaces, which omit some fixed finite class of subspaces. First, we recall the Herwig-Lascar Theorem. Suppose $\cL$ is a finite relational language, and $M$ and $N$ are $\cL$-structures. We write $M\seq N$ to denote that $M$ is a substructure of $N$.  A function $f\colon M\to N$ is a \emph{weak homomorphism} if, given $n$-ary $R\in\cL$ and $\xbar\in M^n$, if $M\models R(\xbar)$ then $N\models R(f(\xbar))$. Given a class $\cF$ of $\cL$-structures, we say $M$ is \emph{$\cF$-free} if there is no weak homomorphism from an element of $\cF$ to $M$.

\begin{fact}[Herwig-Lascar Theorem \cite{HeLa}]\label{fact:HL}
Fix a finite relational language $\cL$ and a finite class $\cF$ of finite $\cL$-structures. Let $\cK$ be the class of $\cF$-free $\cL$-structures. Then, for any finite $A\in\cK$, if there is some $M\in\cK$ such that $A\seq M$ and every partial automorphism of $A$ extends to an automorphism of $M$, then there is a finite $B\in\cK$ such that $A\seq B$ and every partial automorphism of $A$ extends to an automorphism of $B$. 
\end{fact}  

Toward applying this result in the setting of generalized metric spaces, we set the following definitions and notation.

\begin{definition}\label{def:bad}
Fix a distance monoid $\cR$ and a subset $S\seq R^{>0}$. 
\begin{enumerate}[\hspace{0pt}(1)]
\item Define $\cL_S=\{d_r(x,y):r\in S\}$, where each $d_r(x,y)$ is a binary relation. 
\item Given $r\in S$, let $d(x,y)=r$ denote the relation $d_r(x,y)\wedge d_r(y,x)$.
\item Define $\Sigma_\cR^S=\{(r_0,\ldots,r_n)\in S^n:n>0,~r_0>r_1\p\ldots\p r_n\}$.
\item Given $\sigma=(r_0,\ldots,r_n)\in \Sigma_\cR^S$, define an $\cL_S$-structure $P_\sigma=\{x_0,\ldots,x_n\}$ such that $P_\sigma\models d(x_0,x_n)=r_0$ and $P_\sigma\models d(x_{i-1},x_i)=r_i$ for $1\leq i\leq n$. 
\item Define $\cE^S_\cR=\{P_\sigma:\sigma\in\Sigma^S_\cR\}$.
\end{enumerate}
\end{definition}

\begin{remark}\label{rem:metricL}
Let $\cR$ be a distance monoid, and fix $S\seq R^{>0}$. We can consider $\cR$-metric spaces as $\cL_S$-structures where $d_r(x,y)$ is interpreted as ``distance equal to $r$". By the triangle inequality, any $\cR$-metric space is $\cE^S_\cR$-free as an $\cL_S$-structure. Note also that any weak $\cL_S$-homomorphism between $\cR$-metric spaces, with distances in $S$, must be injective. Therefore, if $\cF$ is a class of $\cR$-metric spaces with distances in $S$, and $A$ is an $\cR$-metric space with distances in $S$, then $A$ is $\cF$-free as an $\cR$-metric space if and only if it is $\cF$-free as an $\cL_S$-structure. 
\end{remark} 

In the setting of the previous remark, while any $\cR$-metric space is an $\cE^S_\cR$-free $\cL_S$-structure, the converse need not hold. Indeed, the main work in Solecki's proof of the Hrushovski property for $\cK_{\R^{\geq0}}$ involves an application of the minimal-path metric to extract a metric space from an $\cE^S_{\R^{\geq0}}$-free $\cL_S$-structure. Therefore, in order to add a new forbidden metric space $A$, we must also omit $\cL_S$-structures which can potentially contain a copy of $A$ after applying the minimal-path metric. This motivates the following definitions.

\begin{definition}\label{def:ext}
Fix a distance monoid $\cR=(R,\p,\leq,0)$. Given a set $X$ and a partial function $\delta\colon X\times X\to R$, we call $(X,\delta)$ a \textbf{partial $\cR$-semimetric space} if the following conditions hold for any $x,y\in X$:
\begin{enumerate}[$(i)$]
\item $(x,x)\in\dom(\delta)$ and $\delta(x,x)=0$;
\item if $(x,y)\in\dom(\delta)$ then $(y,x)\in\dom(\delta)$ and $\delta(x,y)=\delta(y,x)$. 
\end{enumerate}
\end{definition}

The next definition is rather technical, and so we first give informal motivation. In particular, given an $\cR$-metric space $(A,d_A)$, we want to consider partial $\cR$-semimetric spaces which contain an isometric copy of $A$ after applying the minimal-path metric. The most direct way to obtain such a semimetric space is to start with $A$ and, for certain pairs $(x,y)\in A\times A$, remove the distance $d_A(x,y)$ and add a path of new points from $x$ to $y$, with defined distances along each step of the path summing to $d_A(x,y)$. A semimetric space obtained in this way will be called a ``path substitution" of $A$. We now give the formal definition (see also Figure \ref{pathsub} below).

\begin{definition}
Fix a distance monoid $\cR=(R,\p,\leq,0)$. Suppose $A=(A,d)$ is an $\cR$-metric space and $(X,\delta)$ is a partial $\cR$-semimetric space, with $A\seq X$. Then $(X,\delta)$ is a \textbf{path substitution of $A$} if  $d(a,b)=\delta(a,b)$ for all $(a,b)\in\dom(\delta)\cap(A\times A)$ and, for all $(x,y)\in A_\delta:=(A\times A)\backslash\dom(\delta)$, there is a sequence $\pi_{x,y}=(x_0,\ldots,x_n)$ of elements of $X$ such that:
\begin{enumerate}[$(i)$]
\item $X=A\cup\bigcup_{(x,y)\in A_\delta}\pi_{x,y}$;
\item for all $(x,y)\in A_\delta$, if $\pi_{x,y}=(x_0,\ldots,x_n)$ then $x_0=x$, $x_n=y$, $(x_i,x_{i+1})\in\dom(\delta)$ for all $0\leq i<n$, and
$$
\delta(x_0,x_1)\p \delta(x_1,x_2)\p\ldots\p \delta(x_{n-1},x_n)=d(x,y)
$$
(we call $\pi_{x,y}$ a \textbf{distance path for $(x,y)$ in $(X,\delta)$}).
\end{enumerate}
If $\Im(\delta)\seq S\cup\{0\}$, for some $S\seq R^{>0}$, then we say $(X,\delta)$ is a \textbf{path substitution of $A$ over $S$}.
\end{definition}

In order to omit a generalized metric space $A$ using Fact \ref{fact:HL}, it will be necessary to also omit all path substitutions of $A$. Therefore, we must determine conditions under which $A$ has only finitely many path substitutions.

\begin{definition}
Let $\cR$ be a distance monoid. An $\cR$-metric space $(A,d)$ is \textbf{$\cR$-dominated} if there is $r\in R$ such that $d(a,b)<r$ for all $a,b\in A$.
\end{definition}

\begin{lemma}\label{lem:ext}
Suppose $\cR$ is an archimedean distance monoid and $S\seq R^{>0}$ is finite and nonempty. Let $A=(A,d)$ be a finite $\cR$-dominated $\cR$-metric space. Then there are only finitely many path substitutions of $A$ over $S$.
\end{lemma}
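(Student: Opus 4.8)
The plan is to bound both the number of points and the number of distance values occurring in any path extension, and then to observe that there are only finitely many partial semimetrics on a bounded point set with values in a bounded set. Throughout I count path extensions up to isomorphism fixing $A$ pointwise, since the labels of the points of $X\setminus A$ are irrelevant. First I would record two finiteness inputs. Since $S$ is finite, nonempty, and contained in $R^{>0}$, the minimum $s:=\min S$ exists and $s>0$. Since $A$ is finite and has dominated spectrum, for each pair $a,b\in A$ there is some $r_{a,b}\in R$ with $d(a,b)<r_{a,b}$; letting $r^*$ be the largest of these finitely many elements (a maximum exists because $\leq$ is a linear order), I obtain a single $r^*\in R$ with $d(a,b)<r^*$ for all $a,b\in A$.

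The heart of the proof is a bound on the length of distance paths, and this is precisely where the archimedean hypothesis enters (exactly as in Claim \ref{cl1}). Since $\cR$ is archimedean and $s>0$, fix an integer $k>0$ with $r^*\leq ks$, where $ks$ denotes $s\p\cdots\p s$ ($k$ times). Now let $(X,\delta)$ be any path extension of $A$ over $S$, with witnessing distance paths $\{\pi_{x,y}:(x,y)\in A_\delta\}$, and fix $(x,y)\in A_\delta$ with $\pi_{x,y}=(x_0,\ldots,x_n)$. Assuming, as I discuss below, that consecutive points of a distance path are distinct, each edge distance $\delta(x_{i-1},x_i)$ is a nonzero value of $\delta$ lying in $\Im^+(\delta)\seq S$, hence $\delta(x_{i-1},x_i)\geq s$ for every $i$. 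By monotonicity of $\p$ (axiom $(iii)$ of a distance monoid), if $n\geq k$ then the sum of these $n$ terms is at least $ks$, so
$$
d(x,y)=\delta(x_0,x_1)\p\cdots\p\delta(x_{n-1},x_n)\geq ks\geq r^*,
$$
contradicting $d(x,y)<r^*$. Hence $n<k$, so each distance path has at most $k$ points and contributes fewer than $k$ points to $X\setminus A$.

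To finish, note that $A_\delta\seq A\times A$ is finite, so there are at most $|A|^2$ distance paths; thus $|X\setminus A|\leq k|A|^2$ and $|X|\leq |A|+k|A|^2=:M$, a bound independent of the particular path extension. Up to isomorphism fixing $A$, a path extension $(X,\delta)$ is then determined by the number $t\leq k|A|^2$ of new points together with the partial function $\delta$ on the resulting $(|A|+t)$-point set; since $\delta$ takes values in the finite set $S\cup\{0\}$ and each of the at most $M^2$ ordered pairs is either assigned one of finitely many values or left undefined, there are only finitely many such $\delta$. Therefore there are only finitely many path extensions of $A$ over $S$.

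I expect the main obstacle to be the step asserting that every edge distance $\delta(x_{i-1},x_i)$ is nonzero, equivalently that consecutive points of a distance path are distinct. The definition of a partial $\cR$-semimetric space permits $\delta(u,v)=0$ for distinct $u,v$, and such degenerate ``zero edges'' could a priori insert arbitrarily many new points into $X$ along a single path without increasing the displayed sum, which would defeat the length bound. I would handle this by restricting attention to distance paths whose consecutive points are distinct and therefore carry a positive (hence $\geq s$) distance — exactly the situation in the intended application, where $\delta$ arises as the restriction of a genuine $\cR$-metric and so separates distinct points.
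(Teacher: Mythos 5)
Your proof is correct and follows essentially the same route as the paper's: use dominated spectrum to fix a single bound $r$ on distances in $A$, set $s=\min S$, invoke the archimedean property to get $k$ with $r\leq ks$, conclude that every distance path has length less than $k$ so that $|X|\leq|A|+k|A|^2$, and then count the finitely many possible structures on a set of bounded size with distances in a finite set. The zero-edge subtlety you flag is a genuine gap in the letter of the definitions, but it applies equally to the paper's own proof, whose key inequality $ns\leq\delta(x_0,x_1)\p\cdots\p\delta(x_{n-1},x_n)$ likewise presupposes that consecutive points of a distance path carry nonzero (hence $\geq s$) distance; as you note, this is harmless in the intended application, where $\delta$ is the restriction of a genuine $\cR$-metric and so separates distinct points.
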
 
\begin{proof}
Since $S$ is finite, it suffices to produce a uniform bound on $|X|$ where $(X,\delta)$ is a path substitution of $A$ over $S$. Fix $r\in R$ such that $d(a,b)<r$ for all $a,b\in A$. Set $s=\min S$. Since $\cR$ is archimedean, there is some $k>0$ such that $r\leq ks$. We show that if $(X,\delta)$ is a path substitution of $A$ over $S$, then $|X|< |A|+k|A|^2$. To see this, fix a path substitution $(X,\delta)$ of $A$ over $S$, and let $A_\delta=(A\times A)\backslash\dom(\delta)$. We have $X=A\cup\bigcup_{(x,y)\in A_\delta}\pi_{x,y}$, where $\pi_{x,y}$ is a distance path for $(x,y)$ in $(X,\delta)$. Since $|A_\delta|\leq|A|^2$, it suffices to show that, for all $(x,y)\in A_\delta$, if $\pi_{x,y}=(x_0,\ldots,x_n)$ then $n<k$. For this, note that $ns\leq \delta(x_0,x_1)\p\ldots\p\delta(x_{n-1},x_n)=d(x,y)<r\leq ks.$
\end{proof}

If $\cF$ is a class of $\cR$-metric spaces, then there is no guarantee that an $\cF$-free $\cR$-metric space will also omit path substitutions of the spaces in $\cF$. Therefore, we must artificially impose this condition.

\begin{definition}
Let $\cR$ be a distance monoid and $\cF$ a class of $\cR$-metric spaces. Fix $S\seq R^{>0}$.
\begin{enumerate}[\hspace{0pt}(1)]
\item Set $\cF^*_S=\{(X,\delta):(X,\delta)\text{ is a path substitution of some $A\in\cF$ over $S$}\}$.
\item $\cF$ is \textbf{$\cR$-closed under path substitutions over $S$} if any $\cF$-free $\cR$-metric space is $\cF_S^*$-free (as an $\cL_S$-structure via Remark \ref{rem:metricL}).
\end{enumerate}
When the ambient distance monoid  $\cR$ is clear from context, we will just say $\cF$ is \textbf{closed under path substitutions over $S$}.
\end{definition}

As an example, consider the distance monoid $\cS_3$, and let $\cF$ be the class of metric triangles of odd perimeter bounded by $5$. In Figure \ref{pathsub}, we list the elements of $\cF$, and the associated path substitutions over $\{1,2\}$.

\begin{figure}[H]
\begin{tabular}{|>{\centering}m{2.3cm} | >{\centering}m{8.7cm} |}
\hline
 {\small elements of $\cF$} & {\small path substitutions over $\{1,2\}$} \tabularnewline
 \hline
 {\footnotesize
\begin{tikzpicture} 
\draw[fill=black] (0,0) circle (2pt);
\draw[fill=black] (1.5,0) circle (2pt);
\draw[fill=black] (0.75,1.3) circle (2pt);
\node at (0.75,-0.2) {$1$};
\node at (0.2,0.8) {$1$};
\node at (1.3,0.8) {$1$};
\node at (0.75,1.5) {$~$};
\draw[thick] (0,0) -- (1.5,0) -- (0.75,1.3) -- (0,0);
\end{tikzpicture}
}
&
 {\footnotesize
\begin{tikzpicture} 
\draw[fill=black] (0,0) circle (2pt);
\draw[fill=black] (1.5,0) circle (2pt);
\draw[fill=black] (0.75,1.3) circle (2pt);
\node at (0.75,-0.2) {$1$};
\node at (0.2,0.8) {$1$};
\node at (1.3,0.8) {$1$};
\node at (0.75,1.5) {$~$};
\draw[thick] (0,0) -- (1.5,0) -- (0.75,1.3) -- (0,0);
\end{tikzpicture}
}
 \tabularnewline 
\hline
 {\footnotesize
\begin{tikzpicture} 
\draw[fill=black] (0,0) circle (2pt);
\draw[fill=black] (1.5,0) circle (2pt);
\draw[fill=black] (0.75,1.3) circle (2pt);
\node at (0.75,-0.2) {$1$};
\node at (0.2,0.8) {$2$};
\node at (1.3,0.8) {$2$};
\node at (0.75,1.5) {$~$};
\draw[thick] (0,0) -- (1.5,0) -- (0.75,1.3) -- (0,0);
\end{tikzpicture}
}
&
{\footnotesize 
\begin{tikzpicture} 
\draw[fill=black] (0,0) circle (2pt);
\draw[fill=black] (1.5,0) circle (2pt);
\draw[fill=black] (0.75,1.3) circle (2pt);
\node at (0.75,-0.2) {$1$};
\node at (0.2,0.8) {$2$};
\node at (1.3,0.8) {$2$};
\node at (0.75,1.5) {$~$};
\draw[thick] (0,0) -- (1.5,0) -- (0.75,1.3) -- (0,0);
\end{tikzpicture}
\hspace{1pt}
\begin{tikzpicture} 
\draw[fill=black] (0,0) circle (2pt);
\draw[fill=black] (1.5,0) circle (2pt);
\draw[fill=black] (0.75,1.3) circle (2pt);
\draw[fill=black] (0.2,0.75) circle (2pt);
\node at (0.75,-0.2) {$1$};
\node at (-0.05,0.5) {$1$};
\node at (0.35,1.2) {$1$};
\node at (1.3,0.8) {$2$};
\node at (0.75,1.5) {$~$};
\draw[thick] (0,0) -- (1.5,0) -- (0.75,1.3) -- (0.2,0.75) -- (0,0);
\draw[dashed] (0,0) -- (0.75,1.3);
\end{tikzpicture}
\hspace{1pt}
\begin{tikzpicture} 
\draw[fill=black] (0,0) circle (2pt);
\draw[fill=black] (1.5,0) circle (2pt);
\draw[fill=black] (0.75,1.3) circle (2pt);
\draw[fill=black] (0.2,0.75) circle (2pt);
\draw[fill=black] (1.3,0.75) circle (2pt);
\node at (0.75,-0.2) {$1$};
\node at (-0.05,0.5) {$1$};
\node at (0.35,1.2) {$1$};
\node at (1.55,0.5) {$1$};
\node at (1.15,1.2) {$1$};
\node at (0.75,1.5) {$~$};
\draw[thick] (0,0) -- (1.5,0) -- (1.3,0.75) -- (0.75,1.3) -- (0.2,0.75) -- (0,0);
\draw[dashed] (0,0) -- (0.75,1.3) -- (1.5,0);
\end{tikzpicture}
\hspace{1pt}
\begin{tikzpicture} 
\draw[fill=black] (0,0) circle (2pt);
\draw[fill=black] (1.5,0) circle (2pt);
\draw[fill=black] (0.75,1.3) circle (2pt);
\draw[fill=black] (0.75,0.4) circle (2pt);
\node at (0.75,-0.2) {$1$};
\node at (0.4,0.45) {$1$};
\node at (1.1,0.45) {$1$};
\node at (.9,0.8) {$1$};
\node at (0.75,1.5) {$~$};
\draw[thick] (0,0) -- (1.5,0) -- (0.75,0.4) -- (0.75,1.3);
\draw[thick] (0,0) -- (0.75,0.4);
\draw[dashed] (0,0) -- (0.75,1.3);
\draw[dashed] (0.75,1.3) -- (1.5,0);
\end{tikzpicture}
\hspace{2pt}
}  \tabularnewline 
\hline
  \end{tabular}
  \caption{\small Path substitutions are given up to isomorphism, and dotted lines represent original distances that have been replaced with paths. Altogether, the five pairwise non-isomorphic structures in the table make up the class $\cF^*_{\{1,2\}}$. Note that any metric assignment of distances from $\cS_3$ to pairs of points connected by dotted lines leads to a triangle in $\cF$. Therefore any $\cF$-free $\cS_3$-metric space is also $\cF^*_{\{1,2\}}$-free, i.e., $\cF$ is closed under path substitutions over $\{1,2\}$. Lemma \ref{lem:cycles} below generalizes this to arbitrary odd $n\geq 3$ (in place of $5$).}
  \label{pathsub}
\end{figure}

We now prove the main theorem of this section, the statement of which mirrors the Herwig-Lascar Theorem. The proof strategy follows Solecki's proof of the Hrushovski property for $\cK_{\R^{\geq0}}$ in \cite{Sol}.

\begin{theorem}\label{thm:O}
Fix an archimedean distance monoid $\cR$. Suppose $\cF$ is a finite class of finite $\cR$-dominated $\cR$-metric spaces, and $\cA=(A,d_A)$ is a finite $\cF$-free $\cR$-metric space. Suppose $S\seq R^{>0}$ is finite, $\cF$ is closed under path substitutions over $S$, and $S$ contains any nonzero distance occuring in $\cA$ or in an element of $\cF$. If $\cA$ has the extension property in the class of $\cF$-free $\cR$-metric spaces, then $\cA$ has the extension property in the class of finite $\cF$-free $\cR$-metric spaces. 
\end{theorem}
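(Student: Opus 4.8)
The plan is to mimic the proof of Theorem \ref{EPPI} as closely as possible, enriching the omitted class $\cF$ of $\cL$-structures so that the finite structure produced by Herwig and Lascar, and hence the resulting $\cR$-metric space $B$, is forced to be $\cF$-free. First I would fix the finite relational language $\cL=\{d_r(x,y):r\in S\}$ as before (note $S$ already contains all distances appearing in $A$ and in the members of $\cF$), and interpret every $\cR$-metric space as an $\cL$-structure. The class $\Sigma$ of ``triangle-violating'' sequences is still finite by Claim \ref{cl1}, since $\cR$ is archimedean, giving the family $\{P_\sigma:\sigma\in\Sigma\}$ that enforces the triangle inequality. The key new ingredient is to adjoin to this family a finite set $\cG$ of $\cL$-structures encoding the path extensions of members of $\cF$: for each $(X,\delta)\in\cF^*_S$ (a finite collection, by Lemma \ref{lem:ext}, using that each $Y\in\cF$ has dominated spectrum), I would define an $\cL$-structure $P_{(X,\delta)}$ on the vertex set $X$ with $P_{(X,\delta)}\models d_r(x,y)\wedge d_r(y,x)$ exactly when $\delta(x,y)=r\in S$. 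Set $\cF_\cL=\{P_\sigma:\sigma\in\Sigma\}\cup\{P_{(X,\delta)}:(X,\delta)\in\cF^*_S\}$, and let $\cK$ be the class of homomorphically $\cF_\cL$-free $\cL$-structures.

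The next step is to verify the hypothesis of the Herwig--Lascar theorem, namely that the partial $\cL$-automorphisms of $A$ extend to total $\cL$-automorphisms of some structure in $\cK$. Here I would use the assumed extension property of $A$ in the class of \emph{all} $\cF$-free $\cR$-metric spaces: let $U$ be an $\cF$-free $\cR$-metric space containing $A$ in which every partial isometry of $A$ extends to a total isometry. Viewing $U$ as an $\cL$-structure, it is homomorphically $\{P_\sigma\}$-free by the triangle inequality, and it is homomorphically $\cF^*_S$-free precisely because $\cF$ is closed under path extensions over $S$; thus $U\in\cK$. Since a partial $\cL$-automorphism of $A$ is the same data as a partial isometry of $A$ (by choice of $S$), each such map extends to a total $\cL$-automorphism of $U$. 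Herwig--Lascar then yields a \emph{finite} $\cF_\cL$-free $\cL$-structure $C$ in which every partial $\cL$-automorphism of $A$ extends to a total one.

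From $C$ I would extract $B$ exactly as in Theorem \ref{EPPI}: define the graph relation $E$, take $B$ to be the connected component of $C$ containing $A$, and define $d_B(x,y)$ as the minimum of $\delta$-sums over simple $E$-paths from $x$ to $y$ (nonempty by connectedness). The verification that $d_B$ is an $\cR$-metric and that $d_B|_{A\times A}=d_A$ goes through as before, the latter using that $B$ is $\{P_\sigma\}$-free so that shortcuts cannot decrease distances within $A$; and the argument that each partial isometry $\vphi$ extends, via the induced $\cL$-automorphism $\vphi_*$ of $B$ preserving the path sets $\Delta(x,y)$, is identical. The genuinely new point is to check that $B$ is $\cF$-free: if some $Y\in\cF$ weakly embedded in $(B,d_B)$, then realizing each inter-point distance $d_B(f(x),f(y))$ by a witnessing simple $E$-path in $B$ would assemble a weak homomorphism from some path extension $(X,\delta)\in\cF^*_S$ of $Y$ into $C$ as an $\cL$-structure, contradicting that $C$ is $\cF_\cL$-free.

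I expect this last step — recovering, from a weak embedding of $Y$ into the metric space $B$, an honest weak homomorphism of an $\cL$-structure $P_{(X,\delta)}$ into $C$ — to be the main obstacle, since one must be careful that the concatenation of the distance-realizing paths produces a path extension lying in $\cF^*_S$ (the distance-path condition $\delta(x_0,x_1)\p\cdots\p\delta(x_{n-1},x_n)=d(x,y)$ is exactly what a minimizing $E$-path in $B$ guarantees) and that the edge labels all lie in $S$, which is why $\Im^+(d_Y)\seq S$ is hypothesized. The bookkeeping of vertex identifications, when distinct distance paths or the embedded copy of $Y$ share points, is where the ``homomorphic'' (rather than embedding) formulation of $\cF_\cL$-freeness is essential.
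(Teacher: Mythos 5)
Your proposal is correct and follows essentially the same route as the paper's proof: enlarge the omitted family to $\cF_0\cup\cF^*_S$ (finite by Claim \ref{cl1} and Lemma \ref{lem:ext}), use the hypothesized extension property together with closure under path extensions over $S$ to verify the Herwig--Lascar hypothesis, extract $B$ exactly as in Theorem \ref{EPPI}, and obtain $\cF$-freeness of $B$ by assembling distance-witnessing paths into a path extension in $\cF^*_S$ that weakly embeds in $B$. The ``main obstacle'' you flag is resolved just as you anticipate: the minimizing-path definition of $d_B$, together with $\cF_0$-freeness of $B$ (which makes the $S$-label of any pair unique and equal to $d_B$), guarantees the assembled structure is a genuine path extension of the embedded copy of $Y$ over $S$.
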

\begin{proof}
The first main claim is that $\cE^S_\cR$ is finite. Indeed, if $s=\min S$ then, since $\cR$ is archimedean, there is some $k>0$ such that $r\leq ks$ for any $r\in S$. It follows that if $(r_0,\ldots,r_n)\in \Sigma^S_\cR$ then $n<k$, and so $|\cE^S_\cR|=|\Sigma^S_\cR|\leq |S|^k$. 

Next, by Lemma \ref{lem:ext}, $\cF^*_S$ is a finite class of finite $\cR$-semimetric spaces.  Note that any $\cR$-metric space is a path substitution of itself, and so $\cF\seq\cF^*_S$ by assumption on $S$. Altogether, if $\cF^*=\cE^S_\cR\cup\cF^*_S$, then $\cF^*$ is a finite class of finite $\cL_S$-structures, and any $\cF$-free $\cR$-metric space is $\cF^*$-free.

Suppose $\cA$ has the extension property in the class of \emph{all} $\cF$-free $\cR$-metric spaces, and so there is an $\cF$-free $\cR$-metric space $\cU$ (possibly infinite) such that $\cA$ is a subspace of $\cU$ and any partial isometry of $\cA$ extends to a total isometry of $\cU$. We want to replace $\cU$ with a \emph{finite} $\cF$-free $\cR$-metric space, which similarly extends isometries of $\cA$. By assumption on $S$, any partial $\cL_S$-automorphism of $\cA$ extends to a total $\cL_S$-automorphism of $\cU$. By the Herwig-Lascar Theorem, there is a \emph{finite} $\cF^*$-free $\cL_S$-structure $\cC$ such that $\cA$ is a substructure of $\cC$ and any partial $\cL_S$-automorphism of $\cA$ extends to a total $\cL_S$-automorphism of $\cC$.

Let $C$ be the underlying universe of $\cC$, and define a graph relation $E$ on $C$ such that, given distinct $x,y\in C$, $E(x,y)$ holds if and only if $\cC\models d(x,y)=r$ for some $r\in S$. Note that $(A,E)$ is a complete subgraph of $(C,E)$. Let $B\seq C$ be the connected component of $C$ containing $A$. Define $d_B\colon B\times B\to R$ such that $d_B(x,x)=0$ for all $x\in B$ and, given distinct $x,y\in B$,  $d_B(x,y)$ is the smallest element of $\cR$ obtained as a sum of distances on an $E$-path from $x$ to $y$. It is easy to see that $d_B$ is an $\cR$-metric on $B$.  Moreover, $d_B|_{A\times A}=d_A$, since $\cC$ is $\cE^S_\cR$-free and $S$ contains all nonzero distances in $\cA$. Finally, since $B$ is $E$-connected, it is straightforward to show that any partial isometry of $\cA$ extends to a total isometry of $\cB:=(B,d_B)$ (using the defining property of $\cC$). Further detail can be found in \cite{Sol}.

Note that $\cB$ is finite, since $B\seq C$. It remains to show that $\cB$ is $\cF$-free. Since $\cC$ is $\cE^S_\cR$-free as an $\cL_S$-structure, it follows that for any distinct $a,b\in B$ there is at most one $r\in S$ such that $\cC\models d(a,b)=r$ and, in this case, we have  $d_B(a,b)=r$. Suppose, toward a contradiction, that there is $Y\seq B$, with $(Y,d_B)\in\cF$. Then, for any $x,y\in Y$, there is a sequence $\pi_{x,y}=(x_0,\ldots,x_n)$ of elements of $B$ such that:
\begin{enumerate}[$(i)$]
\item $x_0=x$ and $x_n=y$;
\item for all $0\leq i<n$, $d_B(x_i,x_{i+1})\in S$ and $\cC\models d(x_i,x_{i+1})=d_B(x_i,x_{i+1})$;
\item $d_B(x_0,x_1)\p d_B(x_1,x_2)\p\ldots\p d_B(x_{n-1},x_n)=d_B(x,y)$.
\end{enumerate}
Let $X=Y\cup\bigcup_{(x,y)\in Y\times Y}\pi_{x,y}$, and note that $X\seq B$. Define a partial function $\delta\colon X\times X\to R$ such that
\[
\dom(\delta)=\{(x_i,x_{i+1}):0\leq i<n,~(x_0,\ldots,x_n)=\pi_{x,y},~(x,y)\in Y\times Y\},
\]
and, given $(x,y)\in \dom(\delta)$, $\delta(x,y)=d_B(x,y)$. By construction, $(X,\delta)$ is a path substitution of $Y$ over $S$, which contradicts that $\cC$ is $\cF^*_S$-free.  
\end{proof}

\begin{remark}\label{rem:arch}
If $\cF=\emptyset$ then $\cF$ satisfies the assumptions of Theorem \ref{thm:O}. Combined with Proposition \ref{prop:ctble}, as well as the existence of an $\cR$-Urysohn space for any countable $\cR$, this yields Theorem \ref{thm:arch}. 
\end{remark}

We now apply Theorem \ref{thm:O} to the examples discussed in Section \ref{sec:intro}. As a warm up, fix $n\geq 3$ and note that, as an $\cS_2$-metric space, the complete graph $K_n$ is $\cS_2$-dominated and has no proper path substitutions (over $\{1,2\}$). Moreover, any finite $K_n$-free graph has the extension property in the class of $K_n$-free graphs (witnessed by the Henson graph $\cH_n$). Therefore, the class of finite $K_n$-free graphs has the Hrushovski property. Of course, the direct proof of this from the Herwig-Lascar theorem does not require the translation  to $\cS_2$-metric spaces. Thus, for the framework developed here, the interesting examples are metric spaces omitting triangles of odd perimeter.

\begin{lemma}\label{lem:cycles}
Fix $n\geq 3$ odd, and let $n_*=\frac{n+1}{2}$. Let $\cF^c_n$ be the class of metric triangles of odd perimeter bounded by $n$. Let $S=\{1,\ldots,n_*\}$. Then $\cF^c_n$ is $\cS_{n_*}$-closed under path substitutions over $S$.
\end{lemma}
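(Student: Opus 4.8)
The plan is to unwind the phrase ``closed under path extensions over $S$'' and reduce the whole statement to a single fact about closed walks in $A$. First I would record the elementary but crucial reformulation of $\cF^c_n$-freeness: a finite $\cR_{n_*}$-metric space $A$ is $\cF^c_n$-free if and only if every triple $x,y,z\in A$ with $d(x,y)+d(y,z)+d(x,z)\leq n$ has \emph{even} perimeter. Indeed, any coincidence among $x,y,z$ forces the perimeter to be even, so an odd perimeter $\leq n$ automatically yields three \emph{distinct} points, i.e.\ a genuine triangle in $\cF^c_n$. Next I would observe what a weak homomorphism $g$ from a path extension $(X,\delta)$ of a triangle $T=\{p,q,r\}\in\cF^c_n$ into $A$ produces: traversing the three sides of $T$ (each either a direct edge in $\dom(\delta)$ or a distance path) and applying $g$ yields a closed walk $w_0,\dots,w_k=w_0$ in $A$ in which each consecutive $d$-distance equals the corresponding $\delta$-value, and whose total length is the perimeter $P=d_T(p,q)\p d_T(q,r)\p d_T(r,p)$. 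Since a valid triangle of perimeter $\leq n$ has each side of length $\leq\lfloor n/2\rfloor=n_*-1<n_*$, the truncated and ordinary sums agree along each side, so the walk has total ordinary length exactly $P$, which is odd and $\leq n$, with $k\geq 3$.

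It therefore suffices to prove the following Proposition: if an $\cR_{n_*}$-metric space $A$ contains a closed walk of odd total length $\leq n$, then $A$ contains a triangle of odd perimeter $\leq n$. I would prove this by induction on the number $k$ of steps. The base case $k=3$ is immediate from the reformulation: three points of odd perimeter $\leq n$ are forced distinct and form an element of $\cF^c_n$. For $k\geq 4$ the key device is an averaging argument: the $k$ cyclically consecutive pairs of steps have step-sums totalling $2L\leq 2n$, so some consecutive triple $w_{i-1},w_i,w_{i+1}$ satisfies $s:=d(w_{i-1},w_i)+d(w_i,w_{i+1})\leq 2L/k\leq n/2$, whence (as $s$ is an integer) $s\leq n_*-1<n_*$. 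Writing $t=d(w_{i-1},w_{i+1})$, truncation cannot occur in the triangle inequality, so $t\leq s$ and the triple has perimeter $s+t\leq 2s\leq n$. If this perimeter is odd we are done; if it is even, then $t\equiv s\pmod 2$, so deleting $w_i$ and replacing the two steps by a single step of length $t$ gives a shorter closed walk of the \emph{same} (odd) parity and length $\leq L\leq n$, to which the induction hypothesis applies. The degenerate case $t=0$, i.e.\ $w_{i-1}=w_{i+1}$, is handled by deleting both $w_i$ and $w_{i+1}$, which removes an even amount of length.

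Combining these, if $A$ is $\cF^c_n$-free then by the Proposition it admits no closed walk of odd length $\leq n$; since any weak homomorphism from an element of $(\cF^c_n)^*_S$ into $A$ would manufacture exactly such a walk, $A$ must be homomorphically $(\cF^c_n)^*_S$-free, which is the desired conclusion. I expect the main obstacle to be the $k\geq 4$ reduction: one must \emph{simultaneously} keep the excised triple's perimeter within the bound $n$ — which is exactly where the averaging choice $s\leq n/2$ and the tightness $n_*=\frac{n+1}{2}$ are used — and preserve the odd parity of the total length, which is precisely what the even-perimeter (hence non-forbidden) alternative for the excised triple guarantees. The recurring technical point to watch is that the truncated operation $\p$ never interferes, because every relevant partial sum is kept strictly below $n_*$.
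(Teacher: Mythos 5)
Your proof is correct, and its reduction is the same as the paper's: reformulate $\cF^c_n$-freeness so that degenerate (non-injective) triples are covered, observe that no truncation can occur because every side of a triangle in $\cF^c_n$ is at most $n_*-1$, and convert a weak homomorphism from a path extension into a closed walk in $A$ of odd ordinary length at most $n$. Where you genuinely diverge is the combinatorial core: the proof that an $\cF^c_n$-free space admits no closed walk of odd length at most $n$. You induct on the number of steps and use an averaging argument to locate consecutive steps with sum $s\leq n/2<n_*$, so that the excised triangle has perimeter $s+t\leq 2s\leq n$; you then either finish (odd perimeter) or shortcut and recurse (even perimeter). The paper instead peels off the last vertex of $(x_1,\ldots,x_{m+1})$ with no averaging and no special choice: writing $q$ for the length of the arc $x_1,\ldots,x_m$, $r$ for the two final steps, and $c=d(x_1,x_m)$ for the chord, the (iterated) triangle inequality gives both $c\leq q$ and $c\leq r$, so the shortened closed walk (length $q+c$) and the excised triangle (perimeter $c+r$) are each bounded by $q+r=p\leq n$; by the inductive hypothesis and the base case both quantities are even, hence $p=q+r$ is even. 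This chord-domination trick shows your averaging step is actually unnecessary — any vertex can be excised — and it also eliminates your case analysis: in particular, in your degenerate case $t=0$ with $k=4$, deleting two vertices produces a $2$-step walk outside the scope of your induction, which has to be dispatched by noting that a $2$-step closed walk has even length; you leave this implicit, though it is harmless since that sub-case is vacuous. What your route buys in exchange is that the bound on the excised triangle is local (it is at most $2s$, independent of the rest of the walk), rather than resting on a comparison of the chord with the long complementary arc.
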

\begin{proof}
Suppose $(X,\delta)$ is a path substitution over $S$ of some triangle in $\cF^c_n$. If there is a weak homomorphism from $(X,\delta)$ to an $\cS_{n_*}$-metric space $(A,d)$, then there is a (possibly non-simple) cycle $(x_1,x_2\ldots,x_m,x_1)$ in $A$, with $m\geq 3$, such that $d(x_1,x_2)+\ldots+d(x_{m-1},x_m)+d(x_m,x_1)$ is odd and bounded by $n$. Therefore, it suffices to fix an $\cF^c_n$-free $\cS_{n_*}$-metric space $(A,d)$ and a sequence $(x_1,\ldots,x_m)$ of elements of $A$, with $m\geq 3$, and show that, if $p:=d(x_1,x_2)+\ldots+d(x_{m-1},x_m)+d(x_m,x_1)$ and $p\leq n$, then $p$ is even.

We proceed by induction on $m$. If $m=3$ and $(x_1,x_2,x_3)$ is injective then the claim follows since $A$ is $\cF^c_n$-free. Otherwise, after a cyclic rotation, we may assume $x_1=x_2$, and so $p=2d(x_2,x_3)$, which is even. Assume the result for $m$ and fix a sequence $(x_1,\ldots,x_{m+1})$ of elements of $A$. Define
\[
q = d(x_1,x_2)+\ldots+d(x_{m-1},x_m)\makebox[.5in]{and} r = d(x_m,x_{m+1})+d(x_{m+1},x_1).
\]
Set $p=q+r$, and suppose $p\leq n$. We want to show that $p$ is even. Since $d(x_m,x_1)\leq r$, we have $q+d(x_m,x_1)\leq q+r=p\leq n$. By induction, $q+d(x_m,x_1)$ is even, and so $q$ and $d(x_m,x_1)$ have the same parity. We also have $d(x_m,x_1)\leq q$, and so $d(x_1,x_m)+r\leq q+r=p\leq n$. By the base case, $d(x_1,x_m)+r$ is even, and so $d(x_m,x_1)$ and $r$ have the same parity. Altogether, $q$ and $r$ have the same parity, and so $p=q+r$ is even.
\end{proof}

We can now prove our desired results. Recall that, given $n\geq 3$ odd, $\Gamma^c_n$ denotes the countable, universal and existentially closed graph omitting cycles of odd length bounded by $n$. Equipped with the path metric, $\Gamma^c_n$ is the \Fraisse\ limit of the class of finite $\cF^c_n$-free $\cS_{n_*}$-metric spaces.

\begin{proof}[Proof of Theorem \ref{thm:oddT}]
Part $(a)$. We apply Theorem \ref{thm:O}, with $\cR=\cS_{n_*}$, $\cF=\cF^c_n$, and $S=\{1,\ldots,n_*\}$. Clearly, $\cS_{n_*}$ is archimedean and $\cF^c_n$ is a finite class of finite $\cS_{n_*}$-metric spaces. By Lemma \ref{lem:cycles}, $\cF^c_n$ is closed under path substitutions over $S$. Moreover, $\Gamma^c_n$ witnesses that any finite $\cF^c_n$-free $\cS_{n_*}$-metric space has the extension property in the class of $\cF^c_n$-free $\cS_{n_*}$-metric spaces. Altogether, we only need to show that each triangle in $\cF^c_n$ is $\cS_{n_*}$-dominated. For, this it suffices to show that $n_*$ does not appear as a distance in any triangle in $\cF^c_n$. Indeed, if $\{a_1,a_2,a_3\}$ is a $3$-point $\cS_{n_*}$-metric space and $d(a_1,a_2)=n_*$, then $d(a_2,a_3)+d(a_3,a_1)\geq n_*$, and so $d(a_1,a_2)+d(a_2,a_3)+d(a_3,a_1)\geq 2n_*=n+1$. 

Part $(b)$. Let $d$ denote the path metric on $\Gamma^c_n$. Then $\Aut(\Gamma^c_n)=\Isom(\Gamma^c_n,d)$, and so we want to show $\Isom(\Gamma^c_n,d)$ has ample generics. Once again, we use the characterization in \cite{KeRo} described before Fact \ref{fact:ample}. Since the standard amalgamation of $\cS_{n_*}$-metric spaces preserves the property of being $\cF^c_n$-free, the Hrushovski property for $\cK^c_n$ implies weak amalgamation for $(\cK^c_n)^{p,m}$, for all $m>0$, using the same argument as for $\cK_{\cS_{n^*}}$. The joint embedding property for $(\cK^c_n)^{p,m}$ is similarly straightforward. 
\end{proof}

\subsection*{Acknowledgements}
I would like to thank the referee for many helpful suggestions, which significantly improved the final draft.


\begin{thebibliography}{10}



\bibitem{Chcat}
Gregory Cherlin, \emph{Two problems on homogeneous structures, revisited},
  Model Theoretic Methods in Finite Combinatorics {558} (2011),
  319--415.
  
  \bibitem{Chercat}
Gregory Cherlin, \emph{Homogeneous ordered graphs and metrically homogeneous
  graphs}, in preparation, available: \url{http://sites.math.rutgers.edu/~cherlin/Paper/_H2018Full.pdf}, 2018.
  
  \bibitem{Cothesis}
  Gabriel Conant, \emph{Model theory and combinatorics of homogeneous metric spaces}, Ph.D. thesis, University of Illinois at Chicago, 2015.

\bibitem{DLPS}
Christian Delhomm{\'e}, Claude Laflamme, Maurice Pouzet, and Norbert Sauer,
  \emph{Divisibility of countable metric spaces}, European J. Combin.  {28} (2007), 1746--1769. 

\bibitem{HeEPA}
Bernhard Herwig, \emph{Extending partial isomorphisms on finite structures},
  Combinatorica {15} (1995), 365--371. 

\bibitem{HeEPA2}
\bysame, \emph{Extending partial isomorphisms for the small index property of
  many {$\omega$}-categorical structures}, Israel J. Math. {107} (1998),
  93--123. 

\bibitem{HeLa}
Bernhard Herwig and Daniel Lascar, \emph{Extending partial automorphisms and
  the profinite topology on free groups}, Trans. Amer. Math. Soc. {352}
  (2000), 1985--2021. 

\bibitem{HHLS}
Wilfrid Hodges, Ian Hodkinson, Daniel Lascar, and Saharon Shelah, \emph{The
  small index property for {$\omega$}-stable {$\omega$}-categorical structures
  and for the random graph}, J. London Math. Soc. (2) {48} (1993),
  204--218.

\bibitem{HrEPA}
Ehud Hrushovski, \emph{Extending partial isomorphisms of graphs}, Combinatorica
  {12} (1992), 411--416.)

\bibitem{KeRo}
A.~S. Kechris and C.~Rosendal, \emph{Turbulence, amalgamation, and generic
  automorphisms of homogeneous structures}, Proc. Lond. Math. Soc. (3)
  {94} (2007), 302--350. 

\bibitem{Kom}
P.~Komj{\'a}th, \emph{Some remarks on universal graphs}, Discrete Math.
  {199} (1999), 259--265.

\bibitem{KMP}
P{\'e}ter Komj{\'a}th, Alan~H. Mekler, and J{\'a}nos Pach, \emph{Some universal
  graphs}, Israel J. Math. {64} (1988), 158--168. 

\bibitem{MalUlt}
Maciej Malicki, \emph{Generic elements in isometry groups of {P}olish
  ultrametric spaces}, Israel J. Math. {206} (2015), 127--153.



\bibitem{PaSo}
M.~Pawliuk and M.~Soki\'{c}, \emph{Amenability and unique ergodicity of
  automorphism groups of countable homogeneous directed graphs}, arXiv:1712.09461, 2017.

\bibitem{SokRQ}
Miodrag Soki\'c, \emph{Relational quotients}, Fund. Math. \textbf{221} (2013),
  no.~3, 189--220.

\bibitem{Sol}
S{\l}awomir Solecki, \emph{Extending partial isometries}, Israel J. Math.
  {150} (2005), 315--331.

\end{thebibliography}
\end{document}